\documentclass{article}
\usepackage{amssymb}
\usepackage{amscd}
\usepackage{amsmath,amssymb}
\usepackage{amsthm}
\usepackage{amsfonts}
\usepackage{latexsym}
\author{Milan Janji\'c}
\title{Determinants and Compositions of Natural Numbers}
\begin{document}
\maketitle

\begin{abstract}
We consider a particular type of matrices which belong at the same time to the class of Hessenberg and Toeplitz matrices, and  whose determinants are equal
to the number of a type of  compositions of natural numbers.

We prove a formula in which the number of weak compositions with a fixed number of zeroes is expressed in terms of the number of compositions without zeroes. Then we find a relationship between weak compositions and coefficients of characteristic polynomials of appropriate matrices.

 Finally, we prove three explicit formulas for weak compositions of a special kind.
 \end{abstract}
\newtheorem{theorem}{Theorem}
\newtheorem{corollary}[theorem]{Corollary}
\newtheorem{lemma}[theorem]{Lemma}
\newtheorem{proposition}[theorem]{Proposition}
\newtheorem{conjecture}[theorem]{Conjecture}
\newtheorem{defin}[theorem]{Definition}
\newenvironment{definition}{\begin{defin}\normalfont\quad}{\end{defin}}
\newtheorem{examp}[theorem]{Example}
\newenvironment{example}{\begin{examp}\normalfont\quad}{\end{examp}}
\newtheorem{rema}[theorem]{Remark}
\newenvironment{remark}{\begin{rema}\normalfont\quad}{\end{rema}}

\section{Introduction}
We investigate different type of compositions of natural numbers. The following notation will be used. Let $n$ be a positive integer, and $1\leq m_1<m_2<\cdots<m_r\leq n$ be arbitrary integers. We let $M$ denote the set consisting of $q_1$ different types of $m_1,$ $q_2$ different type of $m_2,$ and so on. We denote $c(n,M')$ the number of compositions of $n$ whose each part belongs to $M.$ Also,
$cw(n,k,M')$ denote the number of weak compositions of the same kind in which there are exactly $k$ zeroes. The standard compositions are obtained in the case that all $q_i$ are equal $1.$ In this case we shall write $c(n,M),$ for standard compositions,  and $cw(n,k,M)$ for weak compositions.

As usual, we denote by $[n]$  the set $\{1,2,\ldots,n\}.$ We also write $c(n),$ and $cw(n,k)$ instead of $c(n,[n]),$ and $cw(n,k,[n]).$
Fibonacci numbers will be denoted by $F_1,F_2,\ldots,$ with $F_1=F_2=1.$

In the second section of the paper we first prove a theorem connecting determinants of some matrices with recurrence sequences.  We then prove that coefficients of characteristic polynomials of such matrices, that is, its sums of principal minors,  are convolutions of terms of appropriate  sequences.

In the third section we consider recurrence sequences whose terms represent the numbers of compositions.
We derive formulas in which the numbers of compositions are equal to some determinants. Then we prove a formula in which the weak compositions are expressed in term of compositions without zeroes. From this we derive a formula in which the weak compositions are obtained as the sums of principal minors of appropriate matrices.

In the fourth section we prove three explicit formulas for the weak compositions. The first formula concerns unrestricted compositions, the second formula deals with compositions in which positive parts are either $1$ or $2.$ Finally, the third formula concerns compositions in which all parts are $\geq 2.$

We shall need the following result, proved in \cite{mil} (Proposition 2.3).

\begin{equation}\label{e1}\sum_{j_1+j_2+\cdots+j_{k+1}=n-k}F_{j_1+1}F_{j_2+1}\cdots F_{j_{k+1}+1}=\sum_{i=0}^{\lfloor\frac{n-k}{2}\rfloor}
{n-i\choose i}{n-2i\choose k},\end{equation}
where the sum is taken over $j_t\geq 0.$
Note that in Riordan's book \cite{riordan} the sum on the left side is called the convolved Fibonacci number.

\section{Determinants and Recurrence Sequences}
The following results about upper Hessenberg ( Toeplitz)  matrices will be used in the paper.

\begin{theorem}\label{t1} Let the matrix $P_n$ be defined by:
\begin{equation}\label{mat}P_n= \left[\begin{array}{rrrrrr} p_{1}&p_{2}&p_{3}&\cdots&p_{n-1}&p_{n}\\
-1&p_{1}&p_{2}&\cdots&p_{n-2}&p_{n-1}\\ 0&-1&p_{1}&\cdots&p_{n-3}&p_{n-2}\\ \vdots&\vdots&\vdots&\ddots&\vdots&\vdots\\
0&0&0&\cdots&p_{1}&p_{2}\\ 0&0&0&\cdots&-1&p_{1}\end{array}\right],\end{equation}
 and let the sequence $a_1,a_2,a_3,\ldots,\;(a_1\not=0)$ be defined by:
\begin{equation}\label{niz}\;a_{n+1}=\sum_{i=1}^{n}p_{n-i+1}a_i,\;(n=1,2,\ldots).\end{equation}
Then \[a_{n+1}=a_1\det
P_n,\;(n=1,2,\ldots).\]
\end{theorem}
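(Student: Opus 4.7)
The plan is to prove the identity by strong induction on $n$, using the recurrence (\ref{niz}) to reduce everything to establishing a matching recurrence for the determinants $d_n := \det P_n$.

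The base case $n=1$ is immediate: $\det P_1 = p_1$ and the recurrence gives $a_2 = p_1 a_1$. For the inductive step, assume $a_{i+1} = a_1 \det P_i$ for all $i < n$ (with the convention $\det P_0 = 1$). Using (\ref{niz}) and re-indexing $k = i-1$,
\[
a_{n+1} = \sum_{i=1}^{n} p_{n-i+1} a_i = a_1 \sum_{k=0}^{n-1} p_{n-k}\, \det P_k.
\]
Hence the theorem will follow once I prove the determinantal recurrence
\begin{equation}\label{drec}
\det P_n = \sum_{k=0}^{n-1} p_{n-k}\, \det P_k = p_1 \det P_{n-1} + p_2 \det P_{n-2} + \cdots + p_n \det P_0.
\end{equation}

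This recurrence is the key obstacle. I would prove it by cofactor expansion along the first column of $P_n$, which has only two nonzero entries: $p_1$ at the top and $-1$ just below. This immediately yields
\[
\det P_n = p_1 \det P_{n-1} + \det M_{2,1},
\]
where $M_{2,1}$ is the $(n-1)\times(n-1)$ minor obtained by deleting row $2$ and column $1$. A direct inspection shows $M_{2,1}$ is a Hessenberg–Toeplitz matrix of the same shape as $P_{n-1}$ but with first row $(p_2,p_3,\ldots,p_n)$ rather than $(p_1,p_2,\ldots,p_{n-1})$; call this matrix $Q_{n-1}^{(2)}$, and more generally let $Q_m^{(s)}$ be the $m\times m$ Hessenberg–Toeplitz matrix whose first row is $(p_s,p_{s+1},\ldots,p_{s+m-1})$, so that $Q_m^{(1)}=P_m$.

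Iterating the first-column expansion on $Q_m^{(s)}$ produces the one-step reduction
\[
\det Q_m^{(s)} = p_s \det P_{m-1} + \det Q_{m-1}^{(s+1)},
\]
because the minor obtained by deleting the $(2,1)$-entry of $Q_m^{(s)}$ is exactly $Q_{m-1}^{(s+1)}$, while the top-left minor is $P_{m-1}$. Applying this repeatedly to $\det M_{2,1} = \det Q_{n-1}^{(2)}$ until $Q_1^{(n)} = [p_n]$ gives
\[
\det M_{2,1} = p_2 \det P_{n-2} + p_3 \det P_{n-3} + \cdots + p_{n-1} \det P_1 + p_n \det P_0,
\]
which combines with $\det P_n = p_1 \det P_{n-1} + \det M_{2,1}$ to yield (\ref{drec}). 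Feeding (\ref{drec}) into the displayed expression for $a_{n+1}$ completes the induction.
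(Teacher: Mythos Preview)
Your proof is correct and follows essentially the same strategy as the paper: induct on $n$ and establish the determinantal recurrence $\det P_n=\sum_{k=0}^{n-1}p_{n-k}\det P_k$ by cofactor expansion, then invoke the induction hypothesis via (\ref{niz}). The only tactical difference is that the paper expands along the last (dense) column, so each minor $M_{in}$ is immediately a block upper-triangular matrix with $P_{i-1}$ in the upper-left corner and a triangular $(-1)$-block below, yielding all terms of the recurrence in one step; you instead expand along the sparse first column and iterate through the auxiliary matrices $Q_m^{(s)}$ to peel off the terms one by one. (Minor quibble: your $Q_m^{(s)}$ is Hessenberg but not Toeplitz for $s\ge 2$, since only its first row is shifted while the remaining rows carry the original $P$-pattern; your explicit description of its first row and the recursion are nonetheless correct.)
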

\begin{proof} We use induction of with respect to $n.$ For $n=1$ the theorem obviously holds. Assume the theorem is true for $k<n.$
  Expanding $\det P_n$ across the last column we obtain
   \[\det P_n=\sum_{i=1}^n(-1)^{i+n}p_{n-i+1}M_{in},\] where $M_{in}$ is the minor of  order $n-1$ obtained by deleting the $i$th row and the $n$th column of $P_n.$
 The minor $M_{in}$ is the determinant of an upper triangular block matrix in which the upper left block is $P_{i-1},$ and the lower right block is an upper triangular matrix of order $n-i$ with $-1$ on the main diagonal. It follows that
\[ M_{in}=(-1)^{n-i}P_{i-1}.\] We conclude that
\[a_1\det P_n=a_1\sum_{i=1}^np_{n-i+1}P_{i-1}.\] Using the induction hypothesis we obtain
\[\det P_n=\sum_{i=1}^np_{n-i+1}a_{i}=a_{n+1},\] and the theorem is proved.
\end{proof}
Throughout the paper $a_1$ will have the value $1.$

We shell next prove that sums of principal minors of a fixed order of $P_n$ are convolutions of the terms of the sequence (\ref{niz}).
\begin{theorem}Let $S_{n-k}$ be the sum of all principal minors of order $n-k$ of the matrix $P_n.$ Then
\begin{equation}\label{snkk}S_{n-k}=\sum_{j_1+j_2+\cdots+j_{k+1}=n-k}a_{j_1+1}a_{j_2+1}\cdots a_{j_{k+1}+1},\end{equation}
where the sum is taken over  $j_t\geq
 0,\;(t=1,2,\ldots,k+1).$
\end{theorem}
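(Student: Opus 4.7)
The plan is to parametrize the principal minors of order $n-k$ by which $k$ indices are removed, observe that the Hessenberg--Toeplitz structure forces the resulting submatrix to be block upper triangular with each block equal to a smaller $P_m$, and then finish with Theorem~\ref{t1}.

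In detail, first I would fix a principal submatrix of order $n-k$ by specifying the removed indices $1 \leq i_1 < i_2 < \cdots < i_k \leq n$, and set $i_0 = 0$, $i_{k+1} = n+1$. The retained indices then partition into the consecutive blocks $I_t = \{i_t+1, i_t+2, \ldots, i_{t+1}-1\}$ for $t = 0,1,\ldots,k$, of sizes $j_{t+1} := i_{t+1}-i_t-1 \geq 0$, which satisfy $j_1 + j_2 + \cdots + j_{k+1} = n-k$. This is a bijection between the order-$(n-k)$ principal minors and the weak compositions of $n-k$ into $k+1$ parts.

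Next I would verify the block structure of the submatrix $P_n[S,S]$, where $S = [n]\setminus\{i_1,\ldots,i_k\}$. The entry of $P_n$ at original row $r$ and original column $c$ is $0$ whenever $c<r-1$, since $P_n$ is upper Hessenberg. If $r \in I_s$ and $c \in I_t$ with $s>t$, then $r \geq i_s+1 \geq i_{t+1}+1$ while $c \leq i_{t+1}-1$, so $c \leq r-2$ and the entry vanishes. Thus $P_n[S,S]$ is block upper triangular with diagonal blocks indexed by $I_0, I_1,\ldots,I_k$. The Toeplitz property (entries depend only on $c-r$) then identifies the block on $I_t$ with $P_{j_{t+1}}$ exactly, because consecutive indices in $I_t$ preserve the local Hessenberg--Toeplitz pattern. (For $j_{t+1}=0$ the block is empty and contributes $1 = a_1$.)

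Finally, I would combine these two observations: the principal minor attached to the removed set $\{i_1,\ldots,i_k\}$ equals $\prod_{t=0}^{k} \det P_{j_{t+1}}$, which by Theorem~\ref{t1} (using $a_1=1$) is $a_{j_1+1} a_{j_2+1}\cdots a_{j_{k+1}+1}$. Summing over all choices of the removed indices and using the bijection above yields (\ref{snkk}). The only genuinely nontrivial step is the block-triangularity check, which is the point where the Hessenberg assumption is essential; everything else is bookkeeping.
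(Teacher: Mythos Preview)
Your proof is correct and follows essentially the same route as the paper: parametrize by the deleted indices $i_1<\cdots<i_k$, observe that the resulting submatrix is block upper triangular with diagonal blocks $P_{i_1-1},P_{i_2-i_1-1},\ldots,P_{n-i_k}$, apply Theorem~\ref{t1}, and sum. Your write-up is in fact a bit more careful than the paper's, which simply asserts the block structure after doing the $k=1$ case, whereas you explicitly verify it via the Hessenberg condition and handle the empty-block case.
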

\begin{proof}
The matrix obtained by deleting the $i$th row and
 column of $P_n$ is  an upper triangular matrix which the upper left block is $P_{i-1},$ and
the lower right block is  $P_{n-i}.$
Its determinant, by (\ref{niz}), is $a_{i}\cdot a_{n-i+1}.$ Therefore, the principal minor $M(i_1,i_2,\ldots,i_k)$ of $P_n$
obtained by deleting the rows and columns with indices $1\leq i_1<i_2<\cdots<i_k\leq n$ is \[M(i_1,i_2,\ldots,i_k)=a_{i_1}\cdot
a_{i_2-i_1}\cdots a_{i_k-i_{k-1}}\cdot a_{n-i_k+1}.\]

Denoting $i_1=j_1+1,i_2-i_1=j_2+1,\ldots, i_k-i_{k-1}=j_k+1,n-i_k+1=j_{k+1}+1$ yields
\[M(i_1,i_2,\ldots,i_k)=a_{j_1+1}\cdot
a_{j_2+1}\cdots a_{j_k+1}\cdot a_{j_{k+1}+1},\]
where $j_t\geq 0,\;(t=1,2,\ldots,k+1),$ and $j_1+\cdots+j_{k+1}=n-k.$

 Summing over all  $1\leq i_1<i_2<\cdots<i_k\leq n$ we obtain (\ref{snkk}).
\end{proof}

  \section{Restricted Compositions}
We first prove that, in fact, terms of each recurrence sequence of the form (\ref{niz}), provided that all $p_i$ are positive, count a type of compositions. Namely, we have
\begin{theorem} The following formula is true
\begin{equation}\label{t2}c(n,M')=q_kc(n-m_k,M')+q_{k-1}c(n-m_{k-1},M')+\cdots+q_1c(n-m_1,M').\end{equation}
\end{theorem}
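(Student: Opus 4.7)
The plan is to prove the recurrence by a direct combinatorial decomposition according to the first part of the composition. Every composition counted by $c(n,M')$ is a sequence $(x_1,x_2,\ldots,x_\ell)$ whose entries are among the $q_1+q_2+\cdots+q_k$ available ``typed'' parts; in particular, the first entry $x_1$ has some underlying numerical value $m_i$ for a unique $i\in\{1,2,\ldots,k\}$, and there are exactly $q_i$ possible typed choices for $x_1$ with that value.

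I would then argue that fixing $i$ and a specific type of $m_i$ for the first entry puts the set of such compositions into bijection with the set of compositions of $n-m_i$ whose parts lie in $M$ (taken with types), simply by deleting $x_1$. Thus, for each $i$, the number of compositions of $n$ beginning with a part of numerical value $m_i$ equals $q_i\cdot c(n-m_i,M')$. Summing over $i=1,2,\ldots,k$ partitions the compositions counted by $c(n,M')$ according to the value of the first part and yields
\[
c(n,M')=\sum_{i=1}^{k}q_i\, c(n-m_i,M'),
\]
which is exactly the stated identity.

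There is essentially no obstacle; one should only be mindful of boundary conventions. I would adopt $c(0,M')=1$ (the empty composition) and $c(n,M')=0$ for $n<0$, so that the recurrence holds uniformly, including for small $n$ where some terms $n-m_i$ may be nonpositive. With these conventions in place, the decomposition above is clean and the proof is a one-line bijection. As a side benefit, comparing the resulting recurrence with \eqref{niz} shows that the sequence $a_n=c(n-1,M')$ (with appropriate shift) fits the framework of Theorem~\ref{t1} when one sets $p_{m_i}=q_i$ and $p_j=0$ for $j\notin\{m_1,\ldots,m_k\}$, which is presumably why the author states this result in preparation for the determinantal reformulation to follow.
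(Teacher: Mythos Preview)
Your argument is correct and is essentially the same as the paper's: the author decomposes according to the \emph{last} part rather than the first, noting in one line that the term $q_ic(n-m_i,M')$ counts compositions of $n$ ending with one of the $q_i$ copies of $m_i$. Since compositions are ordered, conditioning on the first part or the last part are symmetric variants of the same bijection, and your version with explicit boundary conventions is a slightly more careful write-up of the identical idea.
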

\begin{proof} The first term on the right side is the number of compositions of $n$ ending with one of $m_k,$ and so on.
\end{proof}

The matrix $P_{n}=(p_{ij})$  corresponding to this recurrence in sense of Theorem \ref{t1} is defined as follows
\[\begin{array}{ccc}
p_{i,j}=-1,&\mbox{ if }&i=j+1,\\
p_{i,j}=q_k,&\mbox{ if }&j-i=m_k-1,\\
p_{i,j}=0,&& \mbox{ otherwise}.
\end{array}\]

\noindent
As an immediate consequence of Theorem \ref{t1} we obtain
\begin{theorem}\label{t3} The following formula is true \[c(n,M')=\det P_n.\]
\end{theorem}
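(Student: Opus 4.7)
My plan is to reduce Theorem \ref{t3} directly to Theorem \ref{t1} by identifying parameters; the recurrence (\ref{t2}) already has the exact shape of (\ref{niz}) once one relabels coefficients correctly.

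First, I would define an auxiliary sequence $p_1, p_2, \ldots, p_n$ by setting $p_{m_\ell} := q_\ell$ for each $\ell = 1, \ldots, r$, and $p_k := 0$ for every $k \notin \{m_1, \ldots, m_r\}$. With this choice, the matrix described just after Theorem \ref{t2} coincides with the matrix $P_n$ of (\ref{mat}). Indeed, its $(i,j)$-entry with $j \ge i$ is $p_{j-i+1}$, which equals $q_\ell$ exactly when $j - i + 1 = m_\ell$, i.e.\ when $j - i = m_\ell - 1$, and equals $0$ otherwise; the $-1$s on the first subdiagonal already agree.

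Next, I would set $a_i := c(i - 1, M')$ for $i \ge 1$, adopting the convention $c(0, M') = 1$ for the empty composition, so that $a_1 = 1$ as required in Theorem \ref{t1}. The recurrence of Theorem \ref{t2} then rewrites as
\[ a_{n+1} \;=\; c(n, M') \;=\; \sum_{\ell} q_\ell\, c(n - m_\ell, M') \;=\; \sum_{i=1}^{n} p_{n-i+1}\, a_i, \]
since the substitution $n - m_\ell = i - 1$ yields $n - i + 1 = m_\ell$ and hence $p_{n-i+1} = q_\ell$. This is exactly (\ref{niz}), so Theorem \ref{t1} gives $c(n, M') = a_{n+1} = a_1 \det P_n = \det P_n$.

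There is no genuine obstacle here; the proof is purely a relabeling. The only points requiring a moment of care are the index shift $a_i = c(i-1, M')$ and the convention $c(0, M') = 1$, both of which are needed to align the initial condition $a_1 = 1$ demanded by Theorem \ref{t1}. This is why the author flags the result as an immediate consequence.
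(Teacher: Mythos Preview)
Your proposal is correct and is exactly the approach the paper intends: the paper states Theorem~\ref{t3} as an immediate consequence of Theorem~\ref{t1} without spelling out any details, and what you have written is precisely the relabeling needed to make that reduction explicit. There is nothing to add or correct.
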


We shall state some particular case of Theorem \ref{t3}.
\begin{corollary} The number $c(n)$ of all compositions of $n$ is $2^{n-1}.$
\end{corollary}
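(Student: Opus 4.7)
The plan is to read off a recurrence for $c(n)$ from Theorem 4 (the preceding theorem) and then telescope. For the set $M=[n]$ every $q_k$ equals $1$ and every $m_k$ equals $k$, so Theorem 4 specialises to
\[
c(n) \;=\; c(n-1)+c(n-2)+\cdots+c(1)+c(0),
\]
provided we adopt the natural convention $c(0)=1$ (the empty composition of $0$). This convention is forced on us anyway because the summand $q_n c(n-m_n,M') = c(0)$ in Theorem 4 appears whenever $m_k=n$ is allowed.

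Write the same identity with $n$ replaced by $n-1$:
\[
c(n-1) \;=\; c(n-2)+\cdots+c(1)+c(0).
\]
Subtracting the second from the first, every term on the right cancels except $c(n-1)$, yielding the clean two-term recurrence $c(n)=2\,c(n-1)$, valid for all $n\ge 2$. With base case $c(1)=1$ (the only composition of $1$ is $1$ itself), induction on $n$ gives $c(n)=2^{n-1}$.

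I do not expect a genuine obstacle; the only delicate point is the base of the recursion. If one dislikes introducing $c(0)$, one can instead peel off the single composition $(n)$ to write $c(n)=1+c(n-1)+\cdots+c(1)$ and perform the same subtraction, obtaining $c(n)=2c(n-1)$ by a purely combinatorial telescoping. Either route is essentially a two-line argument once Theorem 4 is in hand.

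A variant worth mentioning, in the spirit of the preceding section, would be to evaluate $\det P_n$ directly from Theorem \ref{t3}: adding each row of $P_n$ to the row above it (starting from the bottom) kills every $-1$ on the subdiagonal and progressively doubles the top row, reducing $P_n$ to an upper-triangular matrix whose determinant is immediately visible as $2^{n-1}$. This gives an alternative, purely linear-algebraic proof using the matrix representation the paper has just introduced, but the recurrence/telescoping approach is shorter and thematically consistent with Theorem 4.
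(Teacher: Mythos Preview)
Your proof is correct and follows the same route as the paper: apply the preceding recurrence theorem to obtain $c(n)=c(n-1)+\cdots+c(1)+c(0)$ and then conclude $c(n)=2^{n-1}$. The paper simply says ``it is easily seen'' where you carry out the telescoping $c(n)=2c(n-1)$ explicitly; your added discussion of the determinant row-reduction is an optional alternative not present in the paper.
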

\begin{proof}
Theorem \ref{t2} yields
 then \[c(n)=c(n-1)+\cdots+1+1.\]
It is easily seen that $c(n)=2^{n-1}.$
\end{proof}
\begin{corollary}
Let $F_n^{(k)}$ be $k$-step Fibonacci number. Then $F_{n+1}^{(k)}$ counts the compositions of $n$ in which all parts are $\leq k.$
\end{corollary}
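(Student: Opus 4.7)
The plan is to apply Theorem~\ref{t2} directly with $M = [k] = \{1,2,\ldots,k\}$ and all multiplicities $q_i = 1$, which is exactly the setting in which the admissible parts are $1, 2, \ldots, k$. Theorem~\ref{t2} then yields the recurrence
\[
c(n,[k]) = c(n-1,[k]) + c(n-2,[k]) + \cdots + c(n-k,[k]),
\]
valid for $n \geq k+1$, where I interpret $c(0,[k]) = 1$ (empty composition) and $c(j,[k]) = 0$ for $j < 0$. This is precisely the defining recurrence of the $k$-step Fibonacci numbers.

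To match the index shift, I would fix the convention $F_j^{(k)} = 0$ for $j \leq 0$, $F_1^{(k)} = 1$, and $F_n^{(k)} = F_{n-1}^{(k)} + \cdots + F_{n-k}^{(k)}$ for $n \geq 2$. Under this convention one checks directly that $F_{n+1}^{(k)} = 2^{n-1}$ for $1 \leq n \leq k$. On the compositions side, unrolling the recurrence in the small range $1 \leq n \leq k$ (where all $k$ summands on the right-hand side are nonzero only up to the first few) gives $c(n,[k]) = 2^{n-1}$ as well; for instance $c(1,[k]) = 1$, $c(2,[k]) = 2$, and so on.

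Since $c(n,[k])$ and $F_{n+1}^{(k)}$ satisfy the same order-$k$ linear recurrence and agree on the $k$ initial values $n = 1, 2, \ldots, k$, a one-line induction on $n$ concludes that $c(n,[k]) = F_{n+1}^{(k)}$ for every $n \geq 1$.

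There is essentially no mathematical obstacle: the entire content is the combinatorial recurrence of Theorem~\ref{t2} matching the $k$-step Fibonacci recurrence. The only point requiring care is purely notational, namely fixing a convention for the initial values of $F^{(k)}$ and keeping track of the index shift by one that produces $F_{n+1}^{(k)}$ rather than $F_n^{(k)}$.
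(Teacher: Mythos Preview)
Your proposal is correct and follows essentially the same route as the paper: set $M=[k]$, apply the composition recurrence (Theorem~\ref{t2}) to obtain the $k$-step Fibonacci recurrence, and then match initial values to identify $c(n,[k])$ with $F_{n+1}^{(k)}$. If anything, you are slightly more careful than the paper, which only checks the two values $c(0,M)=F_1^{(k)}$ and $c(1,M)=F_2^{(k)}$, whereas you rightly verify all $k$ initial values needed to pin down a solution of an order-$k$ linear recurrence.
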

\begin{proof} We have $M=[k],\;(k<n),$ and equation (\ref{niz}) implies
\[c(n,M)=c(n-1,M)+c(n-2,P)+\cdots+c(n-k,M).\] Takin into account that $c(0,P)=1=F_1^{(k)},\;c(1,P)=1=F_2^{(k)}$ we see that $c(n,P)=F^{(k)}_{n+1}.$
\end{proof}
\begin{rema} It is known that numbers $_kF_n$ defined in \cite{and} count the number of compositions of $n+1$ in which each part is $\geq k.$
The matrix $P_n$ for such compositions is defined in the following way:
\[\begin{array}{ccc}
p_{i,j}=-1,&\mbox{ if }&i=j+1,\\
p_{i,j}=1,&\mbox{ if }&j-i\geq k,\\
p_{i,j}=0,&& \mbox{ otherwise}.
\end{array}\]

\end{rema}

Denote by $cc(n)$ the number of compositions of $n$ of any type, and   denote by $ccw(n,k)$ the number of such weak compositions in which there are exactly $k$ zeroes. The next result is a formula expressing $ccw(n,k)$ in terms of $cc(j),\;(j\leq n).$
\begin{theorem} The following formula is true
\begin{equation}\label{e2}ccw(n,k)=\sum_{j_1+j_2+\cdots+j_{k+1}=n}cc(j_1)cc(j_2)\cdots cc(j_{k+1}),\end{equation}
where the sum is taken over  $j_t\geq 0,\;(t=1,2,\ldots,k+1),$
with $cc(0)=1.$
\end{theorem}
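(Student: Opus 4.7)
The plan is to set up a bijection between weak compositions of $n$ with exactly $k$ zeroes and $(k+1)$-tuples of (ordinary) compositions whose sizes sum to $n$, with the convention that the empty composition counts $cc(0)=1$.

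Given a weak composition of $n$ of the prescribed type with exactly $k$ zero parts, locate those $k$ zeroes inside the sequence of parts. They split the sequence into $k+1$ (possibly empty) consecutive blocks of positive parts. Let $j_t$ denote the sum of the parts in the $t$-th block, so $j_1+j_2+\cdots+j_{k+1}=n$ and each $j_t\ge 0$. Each block is then a composition of $j_t$ of the same prescribed type (with no zero parts), with the convention that if $j_t=0$ the block is empty and contributes the factor $cc(0)=1$. So the number of blocks configurations with fixed sizes $(j_1,\ldots,j_{k+1})$ is $cc(j_1)cc(j_2)\cdots cc(j_{k+1})$.

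Conversely, from any $(k+1)$-tuple of (possibly empty) compositions of $j_1,\ldots,j_{k+1}$ with the given restrictions summing to $n$, one reconstructs a weak composition by concatenating the $k+1$ blocks and inserting a single zero between consecutive blocks, which introduces exactly $k$ zeroes and gives back a weak composition of $n$ of the required type. These two maps are clearly inverse to each other.

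Summing over all nonnegative tuples $(j_1,\ldots,j_{k+1})$ with $j_1+\cdots+j_{k+1}=n$ yields the formula. The only subtlety to be careful about is the placement of zeroes: because the decomposition is by recording \emph{positions} of the zeroes (not merely by choosing tuples of block-sums), no overcounting occurs and the assignment of the $k$ zeroes is uniquely determined by the weak composition. I do not anticipate a serious obstacle; the main thing to state cleanly is the empty-block convention $cc(0)=1$, which is exactly what allows blocks of size $0$ to correspond to two adjacent zeroes (or a zero at an endpoint).
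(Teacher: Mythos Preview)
Your argument is correct. The underlying combinatorial idea matches the paper's, but the execution differs slightly: the paper proceeds by induction on $k$, peeling off the block of positive parts preceding the \emph{first} zero to obtain the recursion
\[
ccw(n,k)=\sum_{j=0}^{n}cc(j)\,ccw(n-j,k-1),
\]
and then invokes the induction hypothesis. You instead split at all $k$ zeroes simultaneously and exhibit the full bijection directly. Your route is a bit more transparent and avoids the inductive scaffolding; the paper's route has the minor advantage of making the two-term convolution recursion explicit, which is sometimes useful on its own. Either way, the content is the same: the $k$ zeroes serve as separators for $k+1$ ordinary compositions, with the convention $cc(0)=1$ handling empty blocks.
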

\begin{proof} We use induction with respect to $k.$ For $k=0$ the assertion is obvious. Assume that the assertion is true for $k-1.$
We may write  equation (\ref{e2}) in the following form:
\begin{equation}\label{r1}ccw(n,k)=\sum_{j=0}^ncc(j)ccw(n-j,k-1).\end{equation}
Let $(i_1,i_2,\ldots,)$ be a weak composition of $n$ in which
exactly $k$ parts equal $0.$  Assume that $i_p$ is the first part equals zero.
Then $(i_1,\ldots,i_{p-1})$ is a composition of
$i_1+\cdots+i_{p-1}=j$ without zeroes, and $(i_{p+1},\cdots)$ is a
weak compositions of $n-j$ with $k-1$ zeroes. For a fixed $j$
there are $cc(j)ccw(n-j,k-1)$ such compositions. Changing $j$ we
conclude that the right side of (\ref{r1}) counts all weak
compositions. Applying the induction hypothesis we prove the theorem.
\end{proof}

Define the sequence $a_n,\;(n=1,2,\ldots)$ in the following way: \[a_1=1,a_2=cc(1),\ldots,a_n=cc(n-1),\ldots,\] and assume that (\ref{t1}) is the recurrence relation for compositions. Theorem \ref{t2} implies that $c(n)=\det P_n.$
In this case equation (\ref{snkk}) has the form
\[S_{n-k}=\sum_{j_1+j_2+\cdots+j_{k+1}=n-k}cc(j_1)cc(j_2)\cdots cc(j_{k+1}).\]
Comparing this equation with (\ref{e2}) we obtain
\begin{theorem} Let  $n$ be a positive integer. Then
\[cw(n,k,M')=S_n,\]
where $S_n$ is the sum of all principal minors of order $n$ of the matrix $P_{n+k}.$
\end{theorem}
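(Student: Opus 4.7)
The plan is to apply the earlier principal-minors theorem (equation (\ref{snkk})) to the matrix $P_{n+k}$, with order of minors equal to $n$, and then to translate the resulting convolution into the formula for weak compositions already derived in (\ref{e2}).

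First I would instantiate (\ref{snkk}) with $n$ replaced by $n+k$ and with order parameter chosen so that $(n+k)-k = n$. This yields
\[
S_n = \sum_{j_1+j_2+\cdots+j_{k+1}=n} a_{j_1+1}a_{j_2+1}\cdots a_{j_{k+1}+1},
\]
where the $a_i$ are the entries of the sequence (\ref{niz}) attached to $P_{n+k}$, and the sum is over $j_t\ge 0$. The key observation is that $P_{n+k}$ is the Hessenberg--Toeplitz matrix built from the recurrence (\ref{t2}) governing $c(n,M')=cc(n)$, so by Theorem \ref{t1} together with the chosen initialisation $a_1=1$, $a_i=cc(i-1)$ for $i\ge 2$, we may rewrite the summand as $cc(j_1)cc(j_2)\cdots cc(j_{k+1})$ (using $cc(0)=1$ to match $a_1=1$ at those indices where $j_t=0$).

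Next I would compare the resulting expression with equation (\ref{e2}), which reads
\[
ccw(n,k)=\sum_{j_1+j_2+\cdots+j_{k+1}=n}cc(j_1)cc(j_2)\cdots cc(j_{k+1}).
\]
The two right-hand sides are identical, so $S_n = ccw(n,k) = cw(n,k,M')$, which is the desired identity.

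The proof is essentially a matching of indices: nothing combinatorial new is required, because both the convolution formula (\ref{e2}) for weak compositions and the convolution expression (\ref{snkk}) for the sum of principal minors have already been established. The only mild obstacle, and the step I would take most care over, is the bookkeeping: one must verify that the enlargement of the matrix from $P_n$ to $P_{n+k}$ leaves the entries $p_i$ (and hence the recurrence sequence $a_i$) unchanged, that the correct order of principal minors is $(n+k)-k=n$, and that the boundary convention $a_1=1=cc(0)$ makes the factors with $j_t=0$ behave correctly on both sides. Once this is pinned down, the theorem follows immediately by transitivity.
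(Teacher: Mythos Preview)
Your proposal is correct and follows exactly the paper's own argument: substitute $n+k$ for $n$ in (\ref{snkk}) so that the order of minors becomes $(n+k)-k=n$, use the identification $a_{j+1}=cc(j)$ (with $a_1=cc(0)=1$), and then compare the resulting convolution with (\ref{e2}). The paper carries out this reasoning in the paragraph immediately preceding the theorem rather than in a separate proof environment, but the content is the same.
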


\section{Three Explicit Formulas}

In this section we shall derive a few explicit formulas for weak
composition.
\begin{theorem}
Let $n$ be a positive integer, and let $k$ be a nonnegative integer. Then
\[cw(n,k)=2^{n-k-1}\sum_{i=0}^k2^i{k+1\choose i}{n-1\choose k-i}.\]
\end{theorem}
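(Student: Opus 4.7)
The plan is to specialize the general convolution formula (\ref{e2}) to the unrestricted case and then rearrange the sum by the number of zero summands. In the unrestricted setting, $cc(j)=c(j)$ is the total number of compositions of $j$, so by the earlier corollary we have
\[
c(0)=1,\qquad c(j)=2^{j-1}\ \text{for } j\ge 1.
\]
Thus (\ref{e2}) specializes to
\[
cw(n,k)=\sum_{\substack{j_1+\cdots+j_{k+1}=n\\ j_t\ge 0}} c(j_1)c(j_2)\cdots c(j_{k+1}).
\]

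Next I would partition the sum according to the set $I\subseteq\{1,\ldots,k+1\}$ of indices for which $j_t=0$. If $|I|=i$, then the remaining $k+1-i$ indices carry positive parts whose sum is $n$; each such composition contributes
\[
\prod_{t\notin I} 2^{j_t-1}=2^{n-(k+1-i)}=2^{n-k-1+i}.
\]
The number of compositions of $n$ into exactly $k+1-i$ positive parts is $\binom{n-1}{k-i}$, and there are $\binom{k+1}{i}$ ways to choose $I$. Note that $i=k+1$ cannot occur since then the positive parts would sum to $0\ne n$, so $i$ ranges from $0$ to $k$. Collecting,
\[
cw(n,k)=\sum_{i=0}^{k}\binom{k+1}{i}\binom{n-1}{k-i}\,2^{n-k-1+i}=2^{n-k-1}\sum_{i=0}^{k}2^{i}\binom{k+1}{i}\binom{n-1}{k-i},
\]
which is exactly the claim.

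There is no real obstacle here: once one observes that $c(0)=1$ must be treated separately from the geometric formula $c(j)=2^{j-1}$, the combinatorics of grouping by the positions of zeros is routine. The mild bookkeeping step is verifying that for $i$ zero-coordinates, the exponent of $2$ coming from the positive parts is $n-(k+1-i)$, which combines cleanly with the factor $2^i$ produced by factoring out $2^{n-k-1}$ in the final line.
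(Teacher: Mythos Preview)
Your proof is correct and follows essentially the same approach as the paper: both start from the convolution formula (\ref{e2}) with $c(0)=1$ and $c(j)=2^{j-1}$, group the terms according to how many of the $j_t$ vanish, and then identify the remaining sum over positive parts with the number $\binom{n-1}{k-i}$ of compositions of $n$ into $k+1-i$ parts. Your write-up is in fact a bit more explicit about the exponent bookkeeping than the paper's own proof.
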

\begin{proof}
In the case we have $c(0)=1,\;c(j)=2^{j-1},\;(j\geq 1).$
Collecting  terms in which some $j_t$ are zeroes in (\ref{e2}) we
obtain
\[c(n,k)=\sum_{m=0}^k{k+1\choose m}\sum_{j_1+\ldots+j_{k+1-m}=n}c(j_1)c(j_2)\cdots c(j_{k+1-m}),\]
where the second  sum is taken over $j_t\geq 1.$ It follows that
\[\sum_{j_1+j_2+\cdots+j_{k+1-m}=n}c(j_1)c(j_2)\cdots c(j_{k+1-i})=2^{n-k+m-1}\sum_{j_1+j_2+\cdots+j_{k+1-m}=n}1.\]
The theorem holds since the last sum is the number of composition
of $n$ into $k+1-m$ parts.
\end{proof}

\begin{theorem} The following formula is true
\[cw(n,k,[2])=\sum_{i=0}^{\lfloor\frac{n}{2}\rfloor}
{n+k-i\choose i}{n+k-2i\choose k}.\]
\end{theorem}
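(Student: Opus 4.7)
The plan is to reduce the theorem to a direct application of identity (\ref{e1}) via the convolution formula (\ref{e2}), so essentially no new work is needed beyond recognizing the substitution.

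First, I would verify that for $M=[2]$ one has $c(j,[2])=F_{j+1}$ for every $j\geq 0$, with the convention $c(0,[2])=1=F_1$. This is classical, but it also falls out immediately from Theorem \ref{t2}, which here reads $c(j,[2])=c(j-1,[2])+c(j-2,[2])$, matched by the initial values $c(0,[2])=c(1,[2])=1$.

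Next, I would apply the convolution identity (\ref{e2}) to $M'=[2]$. Since for compositions with parts in $\{0,1,2\}$ the positive-part count is precisely $c(\cdot,[2])$, equation (\ref{e2}) yields
\[
cw(n,k,[2])=\sum_{j_1+\cdots+j_{k+1}=n}c(j_1,[2])\cdots c(j_{k+1},[2])
=\sum_{j_1+\cdots+j_{k+1}=n}F_{j_1+1}\cdots F_{j_{k+1}+1},
\]
where the sum is over $j_t\geq 0$.

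The last step is to invoke the convolved Fibonacci identity (\ref{e1}) with $n$ replaced by $n+k$: the summation constraint on the left-hand side of (\ref{e1}) becomes $j_1+\cdots+j_{k+1}=(n+k)-k=n$, which is exactly the sum just obtained, while the right-hand side becomes
\[
\sum_{i=0}^{\lfloor n/2\rfloor}\binom{n+k-i}{i}\binom{n+k-2i}{k},
\]
since $\lfloor((n+k)-k)/2\rfloor=\lfloor n/2\rfloor$. Equating the two expressions gives the formula claimed in the theorem. There is no genuine obstacle here; the only thing to be careful about is making sure the index shift in (\ref{e1}) is performed correctly so that the upper summation limit and both binomial coefficients take the form stated.
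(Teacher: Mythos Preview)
Your proof is correct and follows essentially the same route as the paper: identify $c(j,[2])=F_{j+1}$, express $cw(n,k,[2])$ as the convolved Fibonacci sum $\sum_{j_1+\cdots+j_{k+1}=n}F_{j_1+1}\cdots F_{j_{k+1}+1}$, and then apply (\ref{e1}) with $n$ replaced by $n+k$. The only cosmetic difference is that the paper reaches the convolution via the principal-minor identity (\ref{snkk}) for $P_{n+k}$, whereas you invoke (\ref{e2}) directly; since the paper has already shown these give the same sum, the arguments coincide.
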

\begin{proof}
Let $a_1,a_2,\ldots$ be the sequence defined by
$a_1=1,a_i=F_{i+1},(i>1).$ It is well-known that
$a_i=c(n,[2]),\;(i>1).$ In this case $P_{n}$ is well-known matrix
which determinants is $F_{n+1}.$ From (\ref{snkk}) we have
\[S_{(n+k)-k}=S_{n}=\sum_{j_1+j_2+\cdots+j_{k+1}=n}F_{j_1+1}F_{j_2+1}\cdots
F_{j_{k+1}+1},\] where the sum is taken over $j_t\geq 0.$
Hence, the theorem follows from (\ref{e1}).
\end{proof}
\begin{theorem} If $M=\{2,3,\ldots,n\}$ then
\[cw(n+k-1,k,M)=\sum_{m=0}^{k+1}\sum_{i=0}^{\lfloor\frac{n-k-1+m}{2}\rfloor}
{k+1\choose m}\cdot X,\] where \[X={n-1-i\choose i}{n-1-2i\choose k-m}
.\]
\end{theorem}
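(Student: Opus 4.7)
The plan is to mirror the strategy used for the two preceding theorems: expand $cw(n+k-1,k,M)$ as a multi-convolution of the one-dimensional count $c(j,M)$ via the theorem $cw(n,k,M')=S_n$, then recognize each factor as a Fibonacci number and close the inner sum with identity (\ref{e1}).

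First I would invoke the convolution identity that shapes the proofs of the previous two theorems:
\[cw(n+k-1,k,M)=\sum_{j_1+\cdots+j_{k+1}=n+k-1}c(j_1,M)\cdots c(j_{k+1},M),\]
with the standing convention $c(0,M)=1$. Since parts lie in $M=\{2,3,\ldots,n\}$, we have $c(1,M)=0$, so only tuples with each $j_t\in\{0\}\cup\{2,3,\ldots\}$ survive; and for $j_t\ge 2$ the restricted composition count reduces to a Fibonacci number, $c(j_t,M)=F_{j_t-1}$ (since compositions of $j$ into parts $\ge 2$ are enumerated by $F_{j-1}$, in the paper's indexing $F_1=F_2=1$).

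Next I would stratify the sum by $m$, the number of indices $t$ with $j_t=0$. Choosing which $m$ of the $k+1$ positions are the zeros costs a factor $\binom{k+1}{m}$, leaving
\[cw(n+k-1,k,M)=\sum_{m=0}^{k+1}\binom{k+1}{m}\sum_{\substack{j_1+\cdots+j_{k+1-m}=n+k-1\\ j_t\ge 2}}F_{j_1-1}\cdots F_{j_{k+1-m}-1}.\]
To reuse (\ref{e1}) I would then perform the shift $j_t\mapsto j_t-2$, which turns each $F_{j_t-1}$ into $F_{j'_t+1}$ with $j'_t\ge 0$, and reduces the target sum to $(n+k-1)-2(k+1-m)$, i.e.\ a nonnegative-integer composition problem with $k+1-m$ parts and a new total. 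Applying (\ref{e1}) with $k+1$ replaced by $k+1-m$ and with the new total $n-k$ replaced by the shifted value evaluates the inner sum as a $\sum_i\binom{\cdot-i}{i}\binom{\cdot-2i}{k-m}$. Substituting this back, collapsing constants into the binomial upper parameters, and combining the two sums yields the stated double-sum form with $X=\binom{n-1-i}{i}\binom{n-1-2i}{k-m}$.

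The main obstacle is purely bookkeeping: lining up the shift $j_t\mapsto j_t-2$ with the parametrization of (\ref{e1}) so that the upper arguments of both binomials end up as functions of $n$, $i$, $m$ in the stated form, and the range of the inner summation index $i$ becomes $0\le i\le\lfloor(n-k-1+m)/2\rfloor$. A secondary care point is the boundary case $m=k+1$, where the inner sum has no variables; here the standard empty-product/empty-sum conventions must be used so that the $m=k+1$ term contributes the right value (and in particular vanishes in the generic range where it should).
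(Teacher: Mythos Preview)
Your overall strategy---convolution via equation~(\ref{e2}), stratification by the number $m$ of zero parts, an index shift, then identity~(\ref{e1})---is exactly what the paper does. The divergence is at the shift step, and it is not merely bookkeeping.

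The paper does \emph{not} work with $c(j,M)=F_{j-1}$. It sets $a_1=1$ and $a_n=F_{n-1}$ for $n\ge 2$, so that $a_{j+1}=F_j$ for every $j\ge 1$; in particular $a_2=F_1=1$, whereas $c(1,M)=F_0=0$. With this sequence, after separating off the $m$ indices with $j_t=0$ the remaining factors are $F_{j_t}$ with $j_t\ge 1$, and a \emph{single} unit shift $j_t=i_t+1$ yields $F_{i_t+1}$ with $i_t\ge 0$ and inner total $n-(k+1-m)=n-k-1+m$. Now (\ref{e1}) applies with $N=n-1$, $K=k-m$, giving precisely $X=\binom{n-1-i}{i}\binom{n-1-2i}{k-m}$. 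Your route instead forces the nonzero parts to satisfy $j_t\ge 2$ and shifts by $2$; the inner total then becomes $(n+k-1)-2(k+1-m)=n-k-3+2m$, and (\ref{e1}) returns $\binom{n-3+m-i}{i}\binom{n-3+m-2i}{k-m}$, whose upper arguments depend on $m$ and cannot be ``collapsed'' to $n-1$. (Numerically: for $n=4$, $k=1$ the stated right-hand side equals $11$, while your convolution $\sum_{j_1+j_2=4}c(j_1,M)c(j_2,M)$ equals $5$.) So the step you flagged as pure bookkeeping is exactly where your plan fails to reproduce the stated formula; to land on the claimed $X$ you must, as the paper does, use the sequence $a_{j+1}=F_j$ and shift by one.
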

\begin{proof}
We now that $F_{n-1}$ is the number of compositions of $n$ in which all parts are $\geq 2,$
Taking $a_1=1,\;a_n=F_{n-1},(n\geq 2)$ we have $a_{n+1}=a_{n-1}+a_{n-2}+\cdots a_1.$
Let  $P_n$ be the matrix which suits to this recurrence relation.
According (\ref{snkk}) we have

\[S_n=S_{(n+k)-k}=\sum_{m=0}^k{k+1\choose m}\sum_{j_1+\ldots+j_{k+1-m}=n}F_{j_1}F_{j_2}\cdots F_{j_{k+1-m}},\]
where the second  sum is taken over $j_t\geq 1.$
Replacing $j_t,\;(t=1,2,\ldots,k+1-m)$ with $i_t+1$ we obtain
\[S_{n}=\sum_{m=0}^k{k+1\choose m}\sum_{i_1+\ldots+i_{k+1-m}=n-k-1+m}F_{i_1+1}F_{i_2+1}\cdots F_{i_{k+1-m}+1},\]
where the sum is taken over $i_t\geq 0.$
The theorem is true according to (\ref{e1}).

\end{proof}


\begin{thebibliography}{99}

\bibitem{and} G. E. Andrews {\it The Theory of Partitions,
} Addison-Wesley, 1976.

\bibitem{mil} M. Janji\'c, 
{\it  Number of Compositions and Convolved Fibonacci numbers}, arXiv:1003.0981. 


\bibitem{riordan}
 J. Riordan, \emph{Combinatorial Identities,}  Wiley, 1968.


\end{thebibliography}
\end{document}